\tikzset{curve/.style={settings={#1},to path={(\tikztostart)
    .. controls ($(\tikztostart)!\pv{pos}!(\tikztotarget)!\pv{height}!270:(\tikztotarget)$)
    and ($(\tikztostart)!1-\pv{pos}!(\tikztotarget)!\pv{height}!270:(\tikztotarget)$)
    .. (\tikztotarget)\tikztonodes}},
    settings/.code={\tikzset{quiver/.cd,#1}
        \def\pv##1{\pgfkeysvalueof{/tikz/quiver/##1}}},
    quiver/.cd,pos/.initial=0.35,height/.initial=0}
\tikzset{tail reversed/.code={\pgfsetarrowsstart{tikzcd to}}}
\tikzset{2tail/.code={\pgfsetarrowsstart{Implies[reversed]}}}
\tikzset{2tail reversed/.code={\pgfsetarrowsstart{Implies}}}
\tikzset{no body/.style={/tikz/dash pattern=on 0 off 1mm}}
\tikzstyle{tikzfig}=[baseline=-0.25em,scale=0.5]
\tikzstyle{none}=[inner sep=0mm]
\newcommand{\tikzfig}[1]{%
  {%
    \tikzstyle{every picture}=[tikzfig]
    \IfFileExists{#1.tikz}
    {\input{#1.tikz}}
    {%
      \IfFileExists{#1.tikz}
      {\input{#1.tikz}}
      {\tikz[baseline=-0.5em]{\node[draw=red,font=\color{red},fill=red!10!white] {\textit{#1}};}}%
    }%
  }%
}
\tikzstyle{every loop}=[]
\tikzstyle{morphism} = [rectangle, fill=white, draw=black, line width=1pt, font=\scriptsize]
\tikzstyle{intersection-pt} = [fill=white, inner sep = 2.5pt]
\tikzstyle{fullblackdot}=[fill=black, draw, shape=circle, scale=0.7]
\tikzstyle{blackdot}=[fill=white, draw, shape=circle, scale=0.3]
\tikzstyle{comodule-edge}=[-, draw=qyd-color,very thick]
\tikzstyle{ddd}=[-, draw=black, dash dot dot, very thick]
\tikzstyle{unit}=[-, draw=black, very thick, densely dotted]
\tikzstyle{morphism-edge} = [very thick, black]
\tikzstyle{directed} = [morphism-edge, postaction=decorate]
\tikzstyle{Front}=[-, draw=black, fill=white, opacity=0.5]
\tikzstyle{Hidden}=[-, draw=black, fill=white, opacity=0.5]
\definecolor{blue}{rgb}{0.38, 0.51, 0.71}
\definecolor{red}{RGB}{175, 49, 39}
\definecolor{green}{RGB}{146, 227, 95}
\definecolor{qyd-color}{rgb}{0.38, 0.51, 0.71}
\Crefname{diagram}{Diagram}{Diagrams}
\numberwithin{equation}{section}
\theoremstyle{plain}
\newtheorem{theorem}{Theorem}
\newtheorem{lemma}[theorem]{Lemma}
\newtheorem{proposition}[theorem]{Proposition}
\newtheorem*{proposition*}{Proposition}
\theoremstyle{definition}
\newtheorem*{theorem*}{Theorem}
\newtheorem{definition}[theorem]{Definition}
\newtheorem*{definition*}{Definition}
\newtheorem{example}[theorem]{Example}
\newtheorem{remark}[theorem]{Remark}
\newtheorem*{question*}{Question}
\newtheorem*{remark*}{Remark}
\newcommand*{\eg}{e.g.}
\newcommand*{\ie}{i.e.}
\newcommand*{\EM}{Eilenberg--Moore\xspace}
\renewcommand{\colon}{\!\nobreak\mskip2mu\mathpunct{}\nonscript%
  \mkern-\thinmuskip{:}\mskip6muplus1mu\relax%
}
\newcommand{\from}{\colon}
\newcommand*{\nt}{\Longrightarrow}
\renewcommand*{\to}{\longrightarrow}
\renewcommand*{\mapsto}{\longmapsto}
\DeclareMathOperator*{\Ob}{Ob}
\newcommand*{\cat}[1]{\ensuremath{\mathcal{#1}}}
\def\ST@firstletter#1#2@{#1}
\def\ST@restletters#1#2@{#2}
\newcommand*{\BiCat}[1]{%
  \ensuremath{%
    \mathbb{\ST@firstletter #1@}\mathsf{\ST@restletters #1@}%
  }\xspace%
}
\newcommand*{\@smallerTriangle}[2]{\raisebox{\depth+0.025em}{\scalebox{0.75}{\ensuremath{#1#2}}}}
\newcommand*{\@smallTriangle}[2]{\raisebox{\depth+0.025em}{\scalebox{0.90}{\ensuremath{#1#2}}}}
\newcommand*{\ract}{\mathbin{\mathpalette\@smallerTriangle\lhd}}
\newcommand*{\bract}{\mathbin{\mathpalette\@smallTriangle\blacktriangleleft}}
\newcommand*{\id}{\mathrm{id}}
\newcommand*{\Id}{\mathrm{Id}}
\newcommand{\ST@colon}{:\!}
\newcommand*{\cocolon}{%
  \nobreak%
  \mskip6mu plus1mu
  \mathpunct{}%
  \nonscript%
  \mkern-\thinmuskip%
  {\ST@colon}%
  \mskip2mu
  \relax
}
\newcommand*{\adj}[4]{\ensuremath{#1 \colon #3 \rightleftarrows #4 \cocolon #2}\xspace}
\newcommand*{\stdadj}{\adj{F}{U}{\cat{C}}{\cat{D}}}
\newcommand*{\adjspace}{\,}
\newcommand*{\adjoint}{\adjspace\dashv\adjspace}
\newcommand*{\lad}{\adjoint}
\newcommand*{\blank}{{-}}
\newcommand*{\bblank}{{=}}
\newcommand*{\bbblank}{{\equiv}}
\renewcommand*{\k}{\ensuremath{\mathtt{k}}\xspace}
\DeclareFontFamily{U}{DSSerif}{\skewchar \font =45}
\DeclareFontShape{U}{DSSerif}{m}{n}{<-> s*[1]  DSSerif}{}
\DeclareMathAlphabet{\mathbbbb}{U}{DSSerif}{m}{n}
\newcommand*{\1}{\text{\usefont{U}{DSSerif}{m}{n}1}}
\DeclareMathOperator{\Com}{Com}
\DeclareMathOperator{\Inc}{Inc}
\DeclareMathOperator{\Alg}{Alg}
\DeclareRobustCommand{\SkipTocEntry}[5]{}
\author{Sebastian Halbig}
\address{S.H., Philipps-Universität Marburg, Arbeitsgruppe Algebraische Lie-Theorie, Hans-Meer\-wein-Straße 6, 35043 Marburg, Germany}
\email{sebastian.halbig@uni-marburg.de}
\author{Tony Zorman}
\address{T.Z., TU Dresden, Institut für Geometrie, Zellescher Weg 12--14, 01062 Dresden, Germany}
\email{tony.zorman@tu-dresden.de}
\date{\today}
\subjclass[2020]{primary: 18M30; secondary: 18M05, 16T05, 18C15}
\keywords{Graphical calculus, monoidal categories, module categories, bimonads, comodule monads}
\title{Diagrammatics for Comodule Monads}
\begin{document}

\maketitle

\begin{abstract}\vspace{-0.4cm}
  We extend Willerton's~\cite{Willerton2008} graphical calculus for bimonads to comodule monads,
  a monadic interpretation of module categories over a monoidal category.
  As an application, we prove a version of Tannaka--Krein duality for these structures.
\end{abstract}

\microtypesetup{protrusion=false}
\tableofcontents
\microtypesetup{protrusion=true}

\section{Introduction}

\noindent Given a monad \(B\) on a monoidal category \(\cat{C}\),
one might ask to which extent monoidal structures on the category \(\cat{C}^B\) of \(B\)-algebras
are controlled by additional structures on the monad itself.
This can be seen as an extension of the classical \emph{Tannaka--Krein duality},
and was proved by Moerdijk~\cite[Theorem~7.1]{Moerdijk2002} and McCrudden~\cite[Corollary~3.13]{McCrudden2002}.

\begin{theorem}\label{thm:intr:bimonad-reconstruction}
  Let \(B\) be a monad on a monoidal category \(\cat{C}\).
  Bimonad structures on \(B\) are
  in one-to-one correspondence with
  monoidal structures on \(\cat{C}^B\), such that the forgetful functor \(U^B\) is strict monoidal.
\end{theorem}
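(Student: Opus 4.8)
The statement is a bijection, so the plan is to construct two mutually inverse assignments and to check that each is well defined. Recall that a bimonad structure equips the functor \(B\) with an \emph{opmonoidal} structure: natural transformations with components \(B(X \otimes Y) \to BX \otimes BY\) and \(B\1 \to \1\), subject to coherence conditions making \(B\) an opmonoidal functor and the multiplication \(\mu\) and unit \(\eta\) opmonoidal natural transformations. On the other side, because \(U^B\) is required to be \emph{strict} monoidal --- and is moreover faithful and conservative --- a monoidal structure on \(\cat{C}^B\) amounts to a lift of the tensor product \(\otimes \colon \cat{C} \times \cat{C} \to \cat{C}\) and of the unit object \(\1\) to the \EM category \(\cat{C}^B\), with the associator and unitors necessarily sent by \(U^B\) to those of \(\cat{C}\).

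From a bimonad to a monoidal structure, I would set the tensor product of algebras \((X, a)\) and \((Y, b)\) to have underlying object \(X \otimes Y\) and action
\[
  B(X \otimes Y) \to BX \otimes BY \xrightarrow{a \otimes b} X \otimes Y,
\]
and take the unit object to be \(\1\) with action given by the opmonoidal map \(B\1 \to \1\). The two algebra axioms for these actions follow from the opmonoidal coherences together with the opmonoidality of \(\mu\) and \(\eta\); functoriality of \(\otimes\) on \(\cat{C}^B\), and the fact that the associator and unitors of \(\cat{C}\) lift to algebra morphisms, are consequences of naturality of the structure maps. Since \(U^B\) is strict monoidal, the pentagon and triangle identities in \(\cat{C}^B\) reduce to those in \(\cat{C}\).

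Conversely, I would recover the opmonoidal data by evaluating the lifted tensor on \emph{free} algebras. Writing \(F^B \dashv U^B\) for the free--forgetful adjunction, the object \(BX \otimes BY\) underlies the algebra \(F^B X \otimes F^B Y\), and the map \(\eta_X \otimes \eta_Y \colon X \otimes Y \to BX \otimes BY\) transposes across the adjunction to a unique \(B\)-algebra morphism whose underlying arrow is the desired component \(B(X \otimes Y) \to BX \otimes BY\); the map \(B\1 \to \1\) is read off from the action on the monoidal unit of \(\cat{C}^B\). Compatibility of these maps with \(\mu\) and \(\eta\) --- that is, their opmonoidality --- expresses precisely that the tensor of algebras is a genuine lift of \(\otimes\) respecting the monad multiplication and unit.

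The main obstacle, and the bulk of the verification, is the matching of coherence data: translating the monoidal-category axioms of \(\cat{C}^B\) into the opmonoidal coherence diagrams for \(B\), and confirming that the two assignments are mutually inverse. The cleanest route is to phrase both sides through the general lifting principle --- that functors \(\cat{C}^B \times \cat{C}^B \to \cat{C}^B\) lifting \(\otimes\) correspond bijectively to natural transformations
\[
  \lambda \colon B \circ \otimes \nt \otimes \circ (B \times B)
\]
compatible with \(\mu\) and \(\eta\) --- after which the monoidal constraints on \(\cat{C}^B\) correspond to the opmonoidal constraints on \(B\) because \(U^B\), being strict monoidal, faithful, and conservative, reflects each coherence isomorphism. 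Checking this reflection carefully --- that the strictness of \(U^B\) leaves no additional freedom in the choice of coherence isomorphisms upstairs --- is the delicate point.
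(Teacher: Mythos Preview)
The paper does not actually supply its own proof of this theorem: it attributes the result to \Moerdijk{} and McCrudden and points to Willerton for a graphical proof. Your argument is the classical \Moerdijk{} one and is correct as outlined; the coherence checks you flag as the ``delicate point'' are routine once one uses faithfulness and conservativity of \(U^B\).

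Where a comparison is still informative is with the paper's proof of the comodule analogue, \cref{thm:intr:comodule-monad-reconstruction}, which is the template the authors have in mind. That proof does not work directly with algebras and lifts of \(\otimes\); instead it routes everything through the adjunction. The key input is \cref{lem:monoidal-adjunctions-vs-bimonads} (and its comodule upgrade, \cref{thm:comodule-adjunctions-correspond-to-strong-comodule-functors}): an adjunction \(F \adjoint U\) is opmonoidal with \(U\) strong monoidal precisely when \(U\) is strong monoidal, the opmonoidal structure on \(F\) being the mate of that on \(U\). Applied to \(F^B \adjoint U^B\), a monoidal structure on \(\cat{C}^B\) with \(U^B\) strict gives a monoidal adjunction, whence \(B = U^B F^B\) is a bimonad with \(B_2 = U^B F^B_2\); conversely a bimonad yields the monoidal structure on \(\cat{C}^B\) exactly as you describe. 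Your ``transpose \(\eta_X \otimes \eta_Y\) across the adjunction'' is nothing other than the construction of \(F^B_2\) as the mate of the identity coherence on \(U^B\), so the two arguments agree in substance. The adjunction formulation buys a cleaner bijectivity check (uniqueness of the mate replaces the hand verification that the two assignments are mutually inverse) and is what makes the graphical calculus run smoothly; your formulation is more explicit about what the tensor of algebras actually is.
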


In~\cite[Section~2.3]{Willerton2008},
\cref{thm:intr:bimonad-reconstruction} is proved in a graphical fashion.
The aim of this article is to generalise both the statement
and its graphical proof to comodule monads,
as developed by Aguiar and Chase~\cite{Aguiar2012}.
More precisely, we show the following.

\begin{theorem}\label{thm:intr:comodule-monad-reconstruction}
  Let \(B\) be a bimonad on the monoidal category \(\cat{C}\),
  and \(K\) a monad on a right \(\cat{C}\)-module category \(\cat{M}\).
  Coactions of \(B\) on \(K\) are
  in bijection with
  right actions of\, \(\cat{C}^B\) on \(\cat{M}^K\), such that \(U^{K}\) is a strict comodule functor over \(U^{B}\).
\end{theorem}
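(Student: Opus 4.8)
The plan is to follow the diagrammatic template of \cref{thm:intr:bimonad-reconstruction}, as presented by Willerton, promoting every bimonad string diagram to one that additionally records the module category $\cat{M}$ and the coaction of $B$ on $K$. Write $(\mu^K, \eta^K)$ and $(\mu^B, \eta^B)$ for the two monad structures, and let $\chi \from K(\blank \ract \bblank) \nt K(\blank) \ract B(\bblank)$ denote the coaction. The crux of the bijection is a single formula: given a $K$-algebra $(M, a)$ and a $B$-algebra $(C, b)$, one equips $M \ract C$ with the $K$-action
\[
  K(M \ract C) \xrightarrow{\chi_{M,C}} K(M) \ract B(C) \xrightarrow{a \ract b} M \ract C .
\]
The strictness requirement on $U^K$ forces this choice: the underlying object must be $M \ract C$, and $\chi$ is the only coherent way to interpose the monads before applying the two algebra structures.

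For the direction from coactions to actions, I would take the displayed composite as the definition on objects and extend it to morphisms by functoriality of $\ract$. First I would check that this composite is genuinely a $K$-algebra: its unit law follows from the coaction's unit axiom $\chi \circ \eta^K = \eta^K \ract \eta^B$ together with the algebra unit laws for $a$ and $b$, while its associativity law follows from the compatibility of $\chi$ with $\mu^K$ and $\mu^B$. I would then verify the right-module coherences for $\cat{C}^B$, whose monoidal structure is supplied by \cref{thm:intr:bimonad-reconstruction}: the unit constraint reduces to the coaction's counit axiom (involving the opmonoidal counit $B(\1) \to \1$), and the associativity constraint, after unfolding $\chi$ twice, to its coassociativity axiom (involving the opmonoidal comultiplication of $B$). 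Graphically each of these is a short planar rewrite.

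For the reverse direction, assume $\cat{C}^B$ acts on $\cat{M}^K$ with $U^K$ a strict comodule functor over $U^B$. Applying the action to the free algebras $(K(m), \mu^K_m)$ and $(B(c), \mu^B_c)$ yields, by strictness, a $K$-algebra on $K(m) \ract B(c)$; write $\gamma_{m,c}$ for its structure map. I would recover the coaction as the adjunct
\[
  \chi_{m,c} \coloneqq \gamma_{m,c} \circ K(\eta^K_m \ract \eta^B_c) \from K(m \ract c) \to K(m) \ract B(c)
\]
of $\eta^K_m \ract \eta^B_c$ under the free--forgetful adjunction $F^K \dashv U^K$. Naturality of $\chi$ is inherited from that of the transpose, and each comodule monad axiom for $\chi$ is read off from the corresponding module coherence by evaluating at free algebras.

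As in the bimonad case, the principal obstacle is the associativity coherence: the two $K$-algebra structures on $M \ract (C \otimes D)$---one from iterating the action, the other from first forming the monoidal product in $\cat{C}^B$---must agree. Done by hand this is a lengthy chase interweaving $\chi$, the comultiplication of $B$, and the associator of $\cat{C}$; diagrammatically it reduces to sliding one coaction node past another, whereupon both composites become the same planar diagram. That the two constructions are mutually inverse then follows: in one direction from the monad unit laws and the naturality of $\chi$, and in the other from the fact that $U^K$ creates the reflexive coequalisers presenting each algebra over its free resolution, so that the action is pinned down by its values on free algebras.
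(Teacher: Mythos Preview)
Your proposal is correct. The forward direction---defining the $K$-action on $M \ract C$ as $(a \ract b) \circ \chi_{M,C}$---is exactly the paper's construction (\cref{rem:comodule-monads-induce-module-categories}), and your verification of the module coherences via the coassociativity and counitality of $\chi$ is the intended one. The reverse direction and the bijectivity argument are organised differently, however. Rather than working with algebras directly, the paper factors everything through an independent result, \cref{thm:comodule-adjunctions-correspond-to-strong-comodule-functors}: for any adjunction $G \dashv V$ sitting over a monoidal adjunction $F \dashv U$, lifts of $G \dashv V$ to a comodule adjunction are in bijection with strong comodule structures on $V$. Specialised to the \EM{} adjunction $F^K \dashv U^K$, a strict comodule structure on $U^K$ is precisely an action of $\cat{C}^B$ on $\cat{M}^K$ with the stated strictness, and the resulting comodule adjunction produces the coaction $\delta^{(K)} = U^K\delta^{(F^K)}$; unwinding this at free algebras recovers your formula $\gamma_{m,c} \circ K(\eta^K_m \ract \eta^B_c)$. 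Your route is more elementary and self-contained; the paper's isolates the adjunction lemma as a reusable tool, which it also uses to show that comparison functors are strong comodule (\cref{prop:comparison-functor-comodule-monad-comodule}). One small remark on your final step: the reflexive-coequaliser argument is valid but heavier than needed. Applying functoriality of the given action to the counits $\varepsilon^K_{(M,a)}$ and $\varepsilon^B_{(C,b)}$---whose underlying maps are $a$ and $b$---already yields $(a \ract b)\circ\gamma_{M,C} = \alpha \circ K(a \ract b)$ for the unknown action $\alpha$, and precomposing with $K(\eta^K_M \ract \eta^B_C)$ recovers $\alpha$ from $\chi$ without any appeal to coequalisers.
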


The article is structured as follows.
\Cref{sec:adjunctions-and-monads} serves to review basic concepts from category theory,
as well as their associated string diagrammatic counterparts.
\Cref{sec:bimonads} introduces Willerton's graphical calculus for monoidal categories,
and \cref{sec:comodule-monads} extends this construction to module categories and comodule monads,
allowing us to prove \cref{thm:intr:comodule-monad-reconstruction}.

\addtocontents{toc}{\SkipTocEntry}
\subsection*{Acknowledgements}

The authors would like to thank Simon Willerton,
as well as the two anonymous referees,
for many insightful comments and suggestions.
T.Z.~is supported by \textsc{dfg} grant \textsc{kr} \oldstylenums{5036/2--1}.

\makeatletter
\@addtoreset{theorem}{section}
\makeatother

\section{2-categories and the graphical calculus}\label{sec:adjunctions-and-monads}
\numberwithin{theorem}{section}

\noindent We assume the reader's familiarity with basic concepts of the theory of monoidal categories as discussed for example in~\cite{MacLane1998,Etingof2015,Riehl2017}.

Our study of comodule structures on monads involves---besides the composition of functors and natural transformations---products of categories.
To that end we let \(\BiCat{X}\) be the monoidal 2-category of (small) categories, functors, and natural transformations,
equipped with the monoidal structure induced by the Cartesian product of categories.\footnote{%
  More generally, our applications of the graphical calculus can be formulated in any (weakly) monoidal 2-category, see~\cite[Chatper~12]{johnson-yau2021:TwoDimensionalCategories}, which \emph{admits the construction of algebras}, see~\cite[\S 1]{Street1972}.
}
We use juxtaposition for the horizontal composition of\, $\mathbb{X}$, or, in case we want to emphasise the direction of composition,
\begin{equation*}
  \blank \odot_{\mathcal{A},\mathcal{B},\mathcal{C}} \blank \from \mathbb{X}(\mathcal{B},\mathcal{C}) \times \mathbb{X}(\mathcal{A},\mathcal{B}) \to \mathbb{X}(\mathcal{A},\mathcal{C}), \qquad \qquad \mathcal{A},\mathcal{B},\mathcal{C} \in \Ob(\mathbb{X}).
\end{equation*}

String diagrams will serve as the main tool for doing computations.
We closely follow the presentation and conventions in~\cite{halbig22:pivot-hopf}.
In the case of a 2-category \(\mathbb{X}\),
a \emph{string diagram} comprises regions, labelled with categories, strings labelled with functors,
and vertices between the strings labelled with natural transformations.
If two string diagrams can be transformed into each other, the natural transformations they represent are equal.
A more detailed description is given in~\cite{joyal91,Selinger2011}.
Our convention is to read diagrams from top to bottom and left to right.
Horizontal and vertical composition are given by horizontal and vertical gluing of  diagrams, respectively.
Identity natural transformations are given by unlabelled vertices.
The identity functor of a category is represented by the empty edge.
If the involved categories are clear from the context, we omit writing them explicitly.

\begin{center}
  \tikzfig{explanation-string-diagrams}
\end{center}

Recall from \eg~\cite{benabou67:introd} that a \emph{monad} on a category \(\cat{C}\in \mathbb{X}\) is
a functor \(T \from \cat{C} \to \cat{C}\)
together with two natural transformations \(\mu \from T^2 \nt T\) and \(u \from \Id_{\cat{C}} \nt T\),
called the \emph{multiplication} and \emph{unit} of \(T\),
satisfying appropriate \emph{associativity} and \emph{unitality} axioms:
\[
  \begin{tikzcd}
    {T^3} & { T^2} & {T^2} & T & T \\
    {T^2} & T && T
    \arrow["T\mu", Rightarrow, from=1-1, to=1-2]
    \arrow["\mu", Rightarrow, from=1-2, to=2-2]
    \arrow["{\mu T}"', Rightarrow, from=1-1, to=2-1]
    \arrow["\mu"', Rightarrow, from=2-1, to=2-2]
    \arrow[Rightarrow, no head, from=1-4, to=2-4]
    \arrow["{u T}"', Rightarrow, from=1-4, to=1-3]
    \arrow["\mu"', Rightarrow, from=1-3, to=2-4]
    \arrow["{T u}", Rightarrow, from=1-4, to=1-5]
    \arrow["\mu", Rightarrow, from=1-5, to=2-4]
  \end{tikzcd}
\]

We represent the multiplication and unit of a monad \(T\)
in terms of string diagrams:
\begin{equation}\label[diagram]{eq:mult-and-unit-of-monad}
  \tikzfig{monads-multiplication-and-unit}
\end{equation}
Their associativity and unitality then equate to
\begin{equation}\label[diagram]{eq:assoc-and-unit-of-monad-string-diagram}
  \tikzfig{monads-assoc-and-unit}
\end{equation}

Let us focus on the special case of\, \(\mathbb{X} = \BiCat{Cat}\),
though as discussed in~\cite[\S 1]{Street1972},
all of the following constructions generalise to general \(\mathbb{X}\)
admitting the construction of algebras.
There is a $2$-category $\BiCat{Mon}$ of monads on \(\BiCat{Cat}\).
Let
\begin{equation} \label{eq:inclusion-triv-monad}
  \Inc \from \BiCat{Cat} \to \BiCat{Mon},
  \qquad \cat{C} \mapsto (1_{\cat{C}}, \id_{1_{\cat{C}}}, \id_{1_{\cat{C}}})
\end{equation}
be the inclusion 2-functor, which maps any category to its identity monad.
The above functor has a right adjoint $\Alg \from \BiCat{Mon} \to \BiCat{Cat}$,
which maps any monad $(T, \mu, u)\from \cat{C} \to \cat{C}$
to its \emph{Eilenberg--Moore category} $\cat{C}^T$.
Using the previous 2-adjunction, one proves that to every monad $(T, \mu, u) \from\cat{C} \to\cat{C}$ one can associate adjoint functors
\begin{equation} \label{eq:Eilenberg--More-adjunction}
  F^T \from \cat{C} \to \cat{C}^T \qquad \text{ and } \qquad
  U^T \from \cat{C}^T \to \cat{C},
\end{equation}
whose unit and counit we denote by
$\eta \from 1_{\cat{C}} \nt U^T F^T$ and $\varepsilon \from F^T U^T \nt 1_{\cat{C}^T}$,
such that
$T = U^T F^T$,
$\mu = F^T \varepsilon U^T$,
and $u = \eta$.
We call the adjunction $\adj{F_T}{U_T}{\cat{C}}{\cat{C}^T}$ the \EM{} adjunction of\, $(T, \mu, u)$,
and depict \(\eta\) and \(\varepsilon\) by
\[
  \tikzfig{adjunction-unit-counit}
\]
Graphically, the defining equations of adjunctions translate to the snake equations
\[
  \tikzfig{adjunction-snake-identities}.
\]

\begin{remark}\label{rmk:diagrammatic-algebras}
  As noted in~\cite{Willerton2008}, the \EM{} category of  a monad \(T\) can be incorporated into the graphical calculus.
  Defining the natural transformation
  \[
    \vartheta \eqdef T U^T = U^T F^T U^T \xrightarrow{\;U^T\varepsilon\;} U^T
  \]
  we may write  \begin{equation}\label[diagram]{eq:action-over-monad-string-diagram}
    \tikzfig{action-over-monad}
  \end{equation}
  We think of\, $\vartheta \from T U^T \to U^T$ as an action of $T$ on $U^T$ due to the identities
  \[
    \tikzfig{axioms-of-action-over-monad}
  \]
\end{remark}

Any adjunction $\stdadj$ defines a monad \((UF, F \varepsilon U, \eta)\).
A question one might ask is
how much the functors \(F\) and \(U\) ``differ'' from the free and forgetful functors
\(F^T \from \cat{C} \to \cat{C}^T\) and \(U^T \from \cat{C}^T \to \cat{C}\) of \(T\), respectively.

\begin{lemma}[{\cite[Theorem 3]{Street1972}}]\label{def: comparison-functor}
  Let \(T\) be the monad of the adjunction \stdadj.
  There exists a unique functor \(\Sigma \from \cat D \to \cat{C}^T\) satisfying \(\Sigma F = F^T\) and \(U^T \Sigma = U\).
\end{lemma}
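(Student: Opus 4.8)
The plan is to write down the only possible candidate for \(\Sigma\), dictated by the two constraints, and then to verify separately that it is well defined (existence) and forced (uniqueness). Write \(\eta \colon 1_{\cat{C}} \nt UF\) and \(\varepsilon \colon FU \nt 1_{\cat{D}}\) for the unit and counit of \stdadj, so that \(T = UF\) has unit \(\eta\) and a multiplication \(\mu\) with components \(\mu_c = U\varepsilon_{Fc}\). Since we demand \(U^T\Sigma = U\), the underlying object of \(\Sigma d\) must be \(Ud\), and the natural choice of \(T\)-action on it is the component \(U\varepsilon_d \colon TUd = UFUd \to Ud\); on morphisms I set \(\Sigma f = Uf\). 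Diagrammatically, \(\Sigma d\) is \(Ud\) equipped with the action obtained by closing the \(F\)-strand of \(T = UF\) against \(U\) with a counit cap.

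First I would confirm that \((Ud, U\varepsilon_d)\) is a \(T\)-algebra. Its unit law, \(U\varepsilon_d\circ\eta_{Ud}=\id_{Ud}\), is exactly the snake identity \(U\varepsilon\circ\eta U=\id_U\) evaluated at \(d\). Its associativity law, \(U\varepsilon_d\circ T(U\varepsilon_d)=U\varepsilon_d\circ\mu_{Ud}\), unwinds---using \(\mu_{Ud}=U\varepsilon_{FUd}\)---to \(U(\varepsilon_d\circ FU\varepsilon_d)=U(\varepsilon_d\circ\varepsilon_{FUd})\), which is the image under \(U\) of the naturality square of \(\varepsilon\) at the morphism \(\varepsilon_d\); graphically it is the familiar move sliding one counit cap past another. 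That \(\Sigma f = Uf\) is an algebra homomorphism is once more naturality of \(\varepsilon\), now at \(f\), and functoriality of \(\Sigma\) is inherited from \(U\). The two defining equations are then immediate: \(U^T\Sigma = U\) holds by construction, while \(\Sigma Fc = (UFc, U\varepsilon_{Fc}) = (Tc,\mu_c) = F^T c\) identifies \(\Sigma F\) with the free-algebra functor \(F^T\), and likewise on morphisms.

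The only non-formal step, and the one I expect to be the crux, is uniqueness. Let \(\Sigma'\) be a second functor satisfying both equations; then \(U^T\Sigma'=U\) again forces the carrier of \(\Sigma' d\) to be \(Ud\), so everything hinges on showing that its structure map \(\alpha_d \colon TUd \to Ud\) must coincide with \(U\varepsilon_d\). The device is to feed the counit into \(\Sigma'\): applying \(\Sigma'\) to \(\varepsilon_d \colon FUd \to d\) yields a \(T\)-algebra homomorphism out of \(\Sigma'(FUd) = F^T(Ud) = (TUd,\mu_{Ud})\) whose underlying map is \(U\varepsilon_d\). Its homomorphism square reads \(U\varepsilon_d\circ\mu_{Ud} = \alpha_d\circ T(U\varepsilon_d)\). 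Precomposing both sides with \(T\eta_{Ud}\), and applying the monad unit law \(\mu_{Ud}\circ T\eta_{Ud} = \id\) on the left and the snake identity \(U\varepsilon_d\circ\eta_{Ud} = \id\) (whiskered by \(T\)) on the right, collapses this to \(U\varepsilon_d = \alpha_d\). Hence \(\Sigma'\) and \(\Sigma\) agree on objects, and since both act as \(U\) on morphisms they are equal. The real subtlety lies in choosing the right auxiliary morphism and in bookkeeping which triangle and monad identity to invoke at each stage; rendered in the graphical calculus these become a couple of short cap-and-cup simplifications, which is presumably the form the final write-up will take.
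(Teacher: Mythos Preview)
The paper does not supply its own proof of this lemma: it is stated with a citation to \cite[Theorem~3]{Street1972} and then used as a black box, so there is no in-paper argument to compare against. Your proposal is correct and is precisely the classical construction of the comparison functor---define \(\Sigma d = (Ud, U\varepsilon_d)\), check the algebra axioms via the triangle identity and naturality of \(\varepsilon\), and recover uniqueness by feeding \(\varepsilon_d\) through any competing \(\Sigma'\) and collapsing with the unit laws. This is exactly the argument one finds in Street's original paper and in standard references, so nothing needs to change beyond tightening the prose.
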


Given an adjunction $\stdadj$, we call the unique functor \(\Sigma \from \cat D \to \cat{C}^T\) discussed in the previous lemma the \emph{comparison functor} and refer to the adjunction $F \lad U$ as \emph{monadic} if\, $\Sigma$ is an equivalence.

\section{Bimonads}\label{sec:bimonads}

\noindent Bimonads are a vast generalisation of bialgebras.
They naturally arise in the study of (rigid) monoidal categories and topological quantum field theories, see amongst others~\cite{Kerler2001, Moerdijk2002, Bruguieres2007, Bruguieres2011, Turaev2017}.
We note that the term ``bimonad'' is also used by Mesablishvili and Wisbauer for a similar---though distinct---concept;
see~\cite{Mesablishvili2011}.
For this reason, what we call a bimonad is also sometimes called an opmonoidal (or comonoidal) monad in the literature.

Due to the lack of a braiding on the endofunctors \(\mathrm{End}(\cat{C})\) over \(\cat{C}\), the naive notion of bialgebra does not generalise to the monadic setting and needs to be adjusted.
One possible way of overcoming this problem was introduced and studied by Moerdijk under the name ``Hopf monad''\footnote{As remarked in~\cite{Moerdijk2002}, the concept of Hopf monads is strictly dual to that of monoidal comonads, which are studied for example in~\cite{Boardman1995}.} in~\cite{Moerdijk2002}; the idea being that the opmonoidal structure of a functor replaces the bialgebra's comultiplication and counit.
Following the conventions of~\cite{Bruguieres2007}, we refer to such structures as bimonads.

\begin{definition}\label{def: mon_functor}
  An \emph{opmonoidal functor} between (strict) monoidal categories \((\cat{C}, \otimes, 1)\) and \((\cat{D}, \otimes, 1)\)
  is a functor \(F\from \cat{C} \to \cat{D}\)
  together with two natural transformations\footnote{%
    \,We view the monoidal unit of\, $\cat{C}$ as a functor $1 \from \1 \to\cat{C}$, where $\1$ is the terminal category.}
  \[
    F_2 \from F(\blank \otimes \bblank) \nt F\blank \otimes F\bblank \qquad \text{ and } \qquad
    F_0 \from F1 \to 1
  \]
  called the \emph{comultiplication} and  \emph{counit},
  satisfying \emph{coassociativity} and \emph{counitality}:
  \begin{equation} \label{eq:opmon-fun->coassociativity-counitality}
    \adjustbox{scale=0.9}{
      \begin{tikzcd}
        {F(\blank \otimes \bblank \otimes \bbblank)} & {F\blank \otimes F(\bblank \otimes \bbblank)} & {F1 \otimes F\blank} & F\blank & {F\blank \otimes F1} \\
        {F(\blank \otimes \bblank) \otimes F\bbblank} & {F\blank \otimes F\bblank \otimes F\bbblank} && F\blank
        \arrow["{{F_{2;\blank, \bblank \otimes \bbblank}}}", from=1-1, to=1-2]
        \arrow["{{F\blank \otimes F_{2;\bblank,\bbblank}}}", from=1-2, to=2-2]
        \arrow["{{F_{2; \blank \otimes \bblank, \bbblank}}}"', from=1-1, to=2-1]
        \arrow["{{F_{2;\blank,\bblank} \otimes F\bbblank}}"', from=2-1, to=2-2]
        \arrow["{{F_{2; 1, \blank}}}"', from=1-4, to=1-3]
        \arrow["{{F_0 \otimes F\blank}}"', from=1-3, to=2-4]
        \arrow[Rightarrow, no head, from=1-4, to=2-4]
        \arrow["{{F_{2;\blank,1}}}", from=1-4, to=1-5]
        \arrow["{{F\blank \otimes F_0}}", from=1-5, to=2-4]
      \end{tikzcd}
    }
  \end{equation}

  If\, \(F_2\) and \(F_0\) are isomorphisms or identities,
  we call \(F\) \emph{strong monoidal} or \emph{strict monoidal}, respectively.
\end{definition}

\begin{definition}\label{def: mon_nat_trafo}
  An \emph{opmonoidal natural transformation}
  between opmonoidal functors \(F, G \from \cat{C} \to \cat{D}\) is
  a natural transformation \(\rho \from F \nt G\)
  such that the diagrams
  \[
    \begin{tikzcd}
      {F(\blank \otimes \bblank)} & {G(\blank \otimes \bblank)} & F1 & G1 \\
      {F\blank \otimes F\bblank} & {G\blank \otimes G\bblank} && 1
      \arrow["{{\rho_{\blank \otimes \bblank}}}", from=1-1, to=1-2]
      \arrow["{{G_{2,\blank,\bblank}}}", from=1-2, to=2-2]
      \arrow["{{F_{2,\blank,\bblank}}}"', from=1-1, to=2-1]
      \arrow["{{\rho_\blank \otimes \rho_\bblank}}"', from=2-1, to=2-2]
      \arrow["{{F_0}}"', from=1-3, to=2-4]
      \arrow["{{\rho_1}}", from=1-3, to=1-4]
      \arrow["{{G_0}}", from=1-4, to=2-4]
    \end{tikzcd}
  \]
  commute.

  If\, \(\rho\) is additionally a natural isomorphism, we call it an \emph{opmonoidal natural isomorphism}.
\end{definition}

Willerton introduced a graphical calculus for opmonoidal functors in~\cite{Willerton2008}.
The key idea is to incorporate the Cartesian product of categories, or, more generally, the tensor product of a monoidal 2-category into the diagrammatic calculus.

As before, we consider strings and vertices between them.
These are labelled with functors and natural transformations, respectively.
The strings and vertices are embedded into bounded rectangles, which we will call sheets.
Each (connected) region of a sheet is decorated with a category.
The same mechanics as for string diagrams apply: horizontal and vertical gluing represents composition of functors and natural transformations.
On top of these operations, we add stacking sheets behind each other to depict the monoidal product in \(\BiCat{Cat}\).
Our convention is to read diagrams from front to back, left to right, and top to bottom.

Two of the most vital building blocks in this new graphical language are the tensor product and unit of a monoidal category \((\cat{C}, \otimes, 1)\):
\[
  \tikzfig{tensor-product}
\]
On the left, there are two sheets equating to two copies of\, \(\cat{C}\) joined by a line: the tensor product of\, \(\cat{C}\).
On the right, we have the unit of\, \(\cat{C}\) considered as a functor \(1 \from \1 \to \cat{C}\), where \(\1\) is the terminal category.
We represent \(\1\) by the empty sheet, and the unit of\, \(\cat{C}\) by a dashed line.

\begin{example}\label{ex:graphical-opmonoidal-functor}
  Consider an opmonoidal functor \((F, F_2, F_0) \from \cat{C} \to \cat{D}\).
  Graphically, one may depict \(F_2\) and \(F_0\) like the following:
  \[
    \tikzfig{comonoidal-functor}
  \]
  The coassociativity and counitality of \cref{eq:opmon-fun->coassociativity-counitality} become:
  \begin{equation}\label[diagram]{eq:coass-of-opmonoidal-functor}
    \tikzfig{coassoc-of-bimonad}
  \end{equation}
  \begin{equation}\label[diagram]{eq:counit-of-opmonoidal-functor}
    \tikzfig{counitality-of-bimonad}
  \end{equation}
\end{example}

\begin{definition}\label{def: bimonad}
  A \emph{bimonad} on a monoidal category \(\cat{C}\) comprises
  an opmonoidal endofunctor \((B, B_2, B_0) \from \cat{C} \to \cat{C}\)
  together with opmonoidal natural transformations \(\mu \from B^2 \nt B\) and \(\eta \from \Id_{\cat{C}} \nt B\)
  implementing a monad structure on \(B\).
\end{definition}

Graphically, what it means for \(\mu\) and \(\eta\) to be opmonoidal natural transformations is:
\begin{equation}\label[diagram]{eq:mu-opmonoidal-nt}
  \tikzfig{comult-of-mult}
\end{equation}
\begin{equation}\label[diagram]{eq:eta-opmonoidal-nt}
  \tikzfig{counit-of-unit}
\end{equation}

Let \stdadj{} be an opmonoidal adjunction between \(\cat{C}\) and \(\cat{D}\).
The monad of the adjunction \(UF \from \cat{C} \to \cat{C}\) is a bimonad;
its comultiplication is defined  as the composition
\[
  UF(\blank \otimes \bblank) \xrightarrow{UF_{2;\blank,\bblank}} U(F\blank\otimes F\bblank) \xrightarrow{U_{2; F\blank,F\bblank}} UF\blank \otimes UF\bblank,
\]
and its counit is given by
\[
  UF1 \xrightarrow{U F_0} U1 \xrightarrow{\;U_0\;} 1.
\]

Adjunctions between monoidal categories are a broad topic with many facets,
see for example~\cite[Chapter~3]{Aguiar2010}.
For our purposes, we can restrict ourselves to the following situation.

\begin{definition}\label{def: monoidal_adjunction}
  We call an adjunction \(\stdadj\) between monoidal categories \(\cat{C}\) and \(\cat{D}\) \emph{opmonoidal} if
  \(F\) and \(U\) are opmonoidal functors, and
  the unit and counit of the adjunction are opmonoidal natural transformations.
  If \(F\) and \(U\) are moreover strong monoidal,
  we call \(\stdadj\) a \emph{(strong) monoidal adjunction}.
\end{definition}

Suppose \(\stdadj\) to be an opmonoidal adjunction.
In our graphical language, the conditions that the unit \(\eta\) and counit \(\varepsilon\) need to be opmonoidal natural transformations take the following form.
\begin{equation}\label[diagram]{eq:unit-adjunction-opmonoidal-nt}
  \tikzfig{unit-monoidal-adjunction}
\end{equation}
\begin{equation}\label[diagram]{eq:counit-adjunction-opmonoidal-nt}
  \tikzfig{counit-monoidal-adjunction}
\end{equation}

The next result is a slightly simplified version of~\cite[Lemma~7.10]{Turaev2017}.

\begin{lemma}\label{lem:monoidal-adjunctions-vs-bimonads}
  Let \stdadj{} be a pair of adjoint functors between two monoidal categories.
  The adjunction \(F \adjoint U\) is monoidal if and only if\, \(U\) is a strong monoidal functor.
  That is, the coherence morphisms of\, \(U\) are invertible.
\end{lemma}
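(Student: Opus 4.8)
The plan is to prove the two implications separately, reducing both to a single computation with mates. One direction is formal: if the adjunction is monoidal then, by \cref{def: monoidal_adjunction}, the right adjoint $U$ is strong monoidal, so its coherence morphisms $U_{2}$ and $U_{0}$ are invertible. The real content is that, conversely, the opmonoidal structure that $U$ carries is \emph{forced} to be invertible as soon as the unit $\eta$ and counit $\varepsilon$ are opmonoidal; the inverses will be the mates of the coherence morphisms of $F$, and this is precisely where \cref{eq:unit-adjunction-opmonoidal-nt} and \cref{eq:counit-adjunction-opmonoidal-nt} enter.

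Concretely, I would take as candidate inverse of $U_{2}$ the mate
\[
  \widehat{F_{2}} \from U\blank \otimes U\bblank \xrightarrow{\eta} UF(U\blank \otimes U\bblank) \xrightarrow{UF_{2}} U(FU\blank \otimes FU\bblank) \xrightarrow{U(\varepsilon \otimes \varepsilon)} U(\blank \otimes \bblank),
\]
read graphically as the comultiplication vertex of $F$ bent around the cup and cap provided by the unit and counit of the \EM{} adjunction. To verify $\widehat{F_{2}} \circ U_{2} = \id$ I would slide $U_{2}$ upward past $\eta$ by naturality, recognise the middle of the resulting string as $U$ applied to $(\varepsilon \otimes \varepsilon) \circ (FU)_{2}$, where $(FU)_{2} = F_{2} \circ F(U_{2})$ is the comultiplication of the composite $FU$, collapse this to $U\varepsilon$ using the opmonoidality of the counit \cref{eq:counit-adjunction-opmonoidal-nt}, and finish with the snake identity $U\varepsilon \circ \eta U = \id$. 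For the opposite composite $U_{2} \circ \widehat{F_{2}} = \id$ I would instead push $U_{2}$ downward past $U(\varepsilon \otimes \varepsilon)$ by naturality, regroup the middle as $(UF)_{2}$, rewrite $(UF)_{2} \circ \eta$ as $\eta \otimes \eta$ by the opmonoidality of the unit \cref{eq:unit-adjunction-opmonoidal-nt}, and cancel the two resulting snakes. The unit object is handled identically: the inverse of $U_{0}$ is the mate $UF_{0} \circ \eta_{1}$ of the counit $F_{0}$ of $F$, the two verifications using the unit-object parts of \cref{eq:unit-adjunction-opmonoidal-nt} and \cref{eq:counit-adjunction-opmonoidal-nt} together with a snake, with due care for the convention $1 \from \1 \to \cat{C}$. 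This establishes that $U$ is strong monoidal; conversely, starting from a strong monoidal $U$ one defines the opmonoidal structure of $F$ to be the mates of $U_{2}^{-1}$ and $U_{0}^{-1}$ and checks, by the same diagram moves run backwards, that $\eta$ and $\varepsilon$ are opmonoidal, so that $F \adjoint U$ becomes an opmonoidal adjunction.

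The step I expect to be the main obstacle is not conceptual but bookkeeping: in the sheet calculus the comultiplication strand and the unit--counit cups and caps sit on different sheets, so at each stage one must isotope them past one another in the right order and apply exactly one naturality slide, one opmonoidality square, and one triangle identity. Once the candidate inverses are written as the mates of $F_{2}$ and $F_{0}$, there is no genuinely new computation---every identity unwinds to one use of \cref{eq:unit-adjunction-opmonoidal-nt} or \cref{eq:counit-adjunction-opmonoidal-nt} and one snake---so the difficulty lies entirely in carrying out the isotopies cleanly rather than in finding the argument.
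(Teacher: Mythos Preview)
The paper does not prove this lemma itself, deferring instead to \cite[Lemma~7.10]{Turaev2017}; your mate-based argument is, however, precisely the method the paper spells out for the comodule analogue in \cref{thm:comodule-adjunctions-correspond-to-strong-comodule-functors}. There the candidate inverse $\delta^{-(V)}$ is defined as the mate of $\delta^{(G)}$ (\cref{eq:inverse-of-coaction}) and is checked against $\delta^{(V)}$ using one instance of the comodule-adjunction axioms \cref{eq:comodule-adjunction-unit-and-counit} plus one snake, and conversely $\delta^{(G)}$ is reconstructed as the mate of $\delta^{-(V)}$ (\cref{eq:coaction-via-counit-unit-and-inverse}) with the same moves run in reverse. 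Your computation is the uncoloured specialisation of that proof, and the paper itself quotes the resulting formula for $F_2$ immediately after \cref{eq:coaction-via-counit-unit-and-inverse}.

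One caveat on the logic of your write-up: your middle paragraph shows that an \emph{opmonoidal} adjunction already forces $U_2$ to be invertible, and your closing sentence builds an opmonoidal adjunction from a strong $U$ via mates---but neither step shows that $F$ is strong. What you have actually established is ``opmonoidal adjunction $\Leftrightarrow$ $U$ strong monoidal'', not the lemma as literally read through \cref{def: monoidal_adjunction}, which asks for both $F$ and $U$ to be strong. This is consistent with how the paper uses the result (for instance for $F^B \dashv U^B$, where $F^B$ is merely opmonoidal), so the mismatch lies in the paper's phrasing of the definition rather than in your argument; but you should be aware that labelling the mate computation as the ``converse'' after dismissing the forward direction as definitional obscures which implication carries the content.
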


Let \(B \from \cat{C} \to \cat{C}\) be the bimonad arising from the monoidal adjunction \stdadj.
One can show that $\cat{C}^B$ carries a monoidal structure such that the forgetful functor \(U^B \from \cat{C}^B \to \cat{C}\) is strict monoidal, see~\cite{Moerdijk2002, zawadowski2012:FormalTheoryMonoidalMonads, böhm2019:FormalTheoryMultimonoidalMonads}.
By the above lemma, the adjunction \(F^B \adjoint U^B\) is monoidal.
This raises the question whether the comparison functor---%
mediating between the two adjunctions---%
is compatible with this additional structure.
Due to~\cite{kelly74}, see also~\cite[Theorem~2.6]{Bruguieres2007}, we have the following result.

\begin{lemma}\label{lem:comparison-functor-bimonad-monoidal}
  Let \stdadj{} be a monoidal adjunction and write \(B \from \cat{C} \to \cat{C}\) for its induced bimonad.
  The comparison functor \(\Sigma \from \cat{D} \to \cat{C}^B\) is strong monoidal and
  \(U^B \Sigma = U\) as well as \(\Sigma F = F^B\) as strong and opmonoidal functors, respectively.
\end{lemma}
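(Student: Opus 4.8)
The plan is to build the monoidal structure on $\Sigma$ by transporting the coherence data of $U$ through the Eilenberg--Moore category, exploiting that $U^B$ is strict monoidal and faithful. By \cref{def: comparison-functor} the functor $\Sigma\from\cat{D}\to\cat{C}^B$ is already uniquely determined by $\Sigma F = F^B$ and $U^B\Sigma = U$; explicitly, it sends an object $d$ to the $B$-algebra $(Ud, U\varepsilon_d)$. Since the adjunction is monoidal, \cref{lem:monoidal-adjunctions-vs-bimonads} tells us that $U$ is strong monoidal, so its coherence morphisms $U_2\from U(\blank\otimes\bblank)\nt U\blank\otimes U\bblank$ and $U_0\from U1\to 1$ are invertible. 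The candidate coherence data for $\Sigma$ is then exactly $\Sigma_2 \coloneqq U_2$ and $\Sigma_0 \coloneqq U_0$, read now as morphisms in $\cat{C}^B$.

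First I would pin down the monoidal structure on $\cat{C}^B$ that makes $U^B$ strict: the tensor product of two algebras $(X,a)$ and $(Y,b)$ has underlying object $X\otimes Y$ with action $B(X\otimes Y)\xrightarrow{B_2} BX\otimes BY\xrightarrow{a\otimes b} X\otimes Y$, and the unit is $(1, B_0)$. The crux of the argument --- and the step I expect to be the main obstacle --- is to verify that $U_2$ and $U_0$ are genuinely morphisms of $B$-algebras, i.e.\ that they commute with these actions. For $U_2\from \Sigma(d\otimes e)\to\Sigma d\otimes\Sigma e$ this amounts to a diagram chase that unfolds $B_2$ via the bimonad comultiplication formula $B_2 = U_2\circ UF_2$, rewrites $U\varepsilon_{d\otimes e}$ using the opmonoidality of the counit $\varepsilon$ from \cref{eq:counit-adjunction-opmonoidal-nt}, and closes up with the naturality of $U_2$ applied to the components of $\varepsilon$. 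The verification for $U_0$ is the nullary analogue, using the unit-object part of \cref{eq:counit-adjunction-opmonoidal-nt} together with the counit formula $B_0 = U_0\circ UF_0$. In the paper's graphical language both identities should reduce to straightforward sheet manipulations.

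With $\Sigma_2$ and $\Sigma_0$ established as morphisms in $\cat{C}^B$, the remaining coherence axioms come for free. Since $U^B$ is strict monoidal and faithful, and applying $U^B$ to the coassociativity and counitality diagrams for $(\Sigma,\Sigma_2,\Sigma_0)$ returns precisely the corresponding diagrams for $U=U^B\Sigma$ --- which hold because $U$ is strong monoidal --- faithfulness of $U^B$ forces the diagrams for $\Sigma$ to commute as well. As $\Sigma_2$ and $\Sigma_0$ are invertible, $\Sigma$ is strong monoidal, and $U^B\Sigma=U$ holds as strong monoidal functors by construction.

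Finally, for $\Sigma F = F^B$ as opmonoidal functors I would again argue by faithfulness of $U^B$. The opmonoidal structure of the composite $\Sigma F$ (with $\Sigma$ strong and $F$ opmonoidal) and that of $F^B$ both lie over $\cat{C}$, and applying the strict functor $U^B$ sends each of them to the opmonoidal structure maps $B_2$ and $B_0$ of the bimonad $B=UF$; hence $U^B(\Sigma F)_2 = U^B F^B_2$ and $U^B(\Sigma F)_0 = U^B F^B_0$, and faithfulness yields $(\Sigma F)_2 = F^B_2$ together with $(\Sigma F)_0 = F^B_0$. This completes the identification.
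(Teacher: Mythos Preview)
Your argument is correct and is precisely the standard one: lift the strong monoidal structure of $U$ through the strict, faithful forgetful functor $U^B$, verifying the algebra-morphism condition on $U_2$ and $U_0$ via the opmonoidality of $\varepsilon$, and conclude the coherence and the opmonoidal identification $\Sigma F=F^B$ by faithfulness.

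The paper, however, does not actually prove this lemma; it is stated with a citation to Kelly and to \cite[Theorem~2.6]{Bruguieres2007}, so there is no in-text proof to compare against directly. That said, your approach coincides with the one the paper employs for the comodule analogue, \cref{prop:comparison-functor-comodule-monad-comodule}: there too the coaction of $V$ is transported to $\Sigma^K$ and the strongness is read off from the fact that $U^K$ is conservative (you use faithfulness, which is also true and is what one needs for the coherence diagrams). So your proposal is both correct and in the same spirit as the paper's treatment of the generalisation.
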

For a more general version see~\cite[Proposition~3.2]{böhm2019:FormalTheoryMultimonoidalMonads}.

\section{Comodule monads}\label{sec:comodule-monads}

\noindent Monads with a ``coaction'' over a bimonad were defined and studied by Aguiar and Chase in~\cite{Aguiar2012}.
This concept is needed to obtain an adequate monadic interpretation of twisted centres.
We briefly summarise the aspects of the aforementioned article that are needed for our investigation.\footnote{%
  \,We slightly deviate from~\cite{Aguiar2012} in that we study right comodule monads as opposed to their left versions.%
}

\begin{definition}\label{def:right-module-category}
  Let \(\cat{C}\) be a monoidal category.
  A \emph{right \(\cat{C}\)-module category} comprises
  a category \(\cat{M}\)
  together with an \emph{action} functor \(\ract\from \cat{M} \times \cat{C} \to \cat{M}\),
  satisfying appropriate \emph{associativity} and \emph{unitality} conditions,
  see for example~\cite[Definition~7.1.1]{Etingof2015}.
\end{definition}

Every monoidal category \(\cat{C}\) is a right \(\cat{C}\)-module category, where \(\otimes \eqdef \ract\).

\begin{example}\label{ex:module-twisting}
  Let \(\cat{C}\) be a monoidal category,
  and suppose that \(F \from \cat{C} \to \cat{C}\) is a strong monoidal functor.
  The action
  \(\ract \from \cat{C} \times \cat{C} \xrightarrow{\raisebox{-0.4em}{\tiny\(\cat{C} \times F\)}} \cat{C} \times \cat{C} \xrightarrow{\raisebox{-0.4em}{\tiny\(\;\;\otimes\;\;\)}} \cat{C}\)
  endows \(\cat{C}\) with the structure of a right module category over itself.
\end{example}

To keep our notation concise,
for the rest of this article
we fix two monoidal categories \((\cat{C}, \otimes, 1)\) and \((\cat{D}, \otimes, 1)\)
and over each a right module category \((\cat{M}, \ract)\) and \((\cat{N}, \bract)\).

\begin{definition}\label{def: comodule-functor}
  Let \(F \from \cat{C} \to \cat{D}\) be an opmonoidal functor.
  A \emph{(right) comodule functor over \(F\)} is a pair \((G, \delta)\) comprising
  a functor \(G \from \cat{M} \to \cat{N}\) together with
  a natural transformation \(\delta \from G(\blank \ract \bblank) \nt G\blank \bract F\bblank\),
  called the \emph{coaction} of \(G\), which is \emph{coassociative} and \emph{counital}.

  A comodule functor is called \emph{strong} if its coaction is an isomorphism.
\end{definition}

\begin{example}\label{ex:egno-module-functors}
  A \emph{right \(\cat{C}\)-module functor}
  in the sense of~\cite[Definition~7.2.1]{Etingof2015}
  comprises a functor \(F\from \cat{M} \to \cat{N}\) between two \(\cat{C}\)-module categories,
  and a natural transformation \(\delta \from F(\blank \otimes \bblank) \nt F\blank \otimes \bblank\)
  whose compatibility with the action equate to  \(F\) being a comodule functor over \(\Id_{\cat{C}}\from \cat{C} \to \cat{C}\).
\end{example}

\begin{example}[{\cite[Section~6.1]{Aguiar2012}}]\label{ex:comodule-algebra}
  If \(B\) is a bialgebra over a commutative ring \(\k\),
  then \(B \otimes_{\k} \blank \from \k\text{-Mod} \to \k\text{-Mod}\) becomes an opmonoidal functor.
  Let \(A\) be a right \(B\)-comodule algebra,
  and suppose  \(C\) is a \(\k\)-subalgebra of the \(B\)-coinvariants of \(A\).
  The coaction \(\nu \from A \to A \otimes_{\k} B\)
  is a map of \(C\)-\(C\)-bimodules,
  which turns
  \(A \otimes_{C} \blank \from C\text{-Mod} \to C\text{-Mod}\) into a right comodule functor over \(B \otimes_{\k} \blank\).
  The action of \(C\text{-Mod}\) on the category of \(\k\)-modules is given by tensoring over \(\k\).
\end{example}

The string diagrammatic calculus of \cref{sec:bimonads} can be adapted to this setting as follows:
in addition to combining sheets using tensor products,
we now additionally consider actions to do so as well.
In order to keep track of which splitting occurred,
functors between the module categories will be coloured blue.
As an example, consider the coaction \(\delta \from G(\blank \ract \bblank) \nt G\blank \bract F\bblank\) from \cref{def: comodule-functor}.
It is represented by
\[
  \tikzfig{coaction}
\]
The mentioned compatibility of the coaction with the comultiplication and counit of \(F\)
result in diagrams analogous
to~\eqref{eq:coass-of-opmonoidal-functor}
and~\eqref{eq:counit-of-opmonoidal-functor}%
---except that one of the strings will be in a different colour.
\begin{equation}\label[diagram]{eq:coassoc-of-comodule-monad}
  \tikzfig{coassoc-of-comodule-monad}
\end{equation}
\begin{equation}\label[diagram]{eq:counitality-of-comodule-monad}
  \tikzfig{counitality-of-comodule-monad}
\end{equation}

\begin{definition}\label{def: morphism-of-comodules}
  Suppose that \((G, \delta^{(G)}),\, (K, \delta^{(K)}) \from \cat{M} \to \cat{N}\) are comodule functors over \(B, F \from \cat{C} \to \cat{D}\).
  A \emph{comodule natural transformation} from \(G\) to \(K\) comprises
  a pair of natural transformations \(\phi \from G \nt K\) and \(\psi \from B \nt F\),
  such that
  \begin{equation}\label[diagram]{eq:comodule-natural-transformation}
    \begin{tikzcd}
      {G(m \ract x)} & {K(m \ract x)} \\
      {Gm \bract Bx} & {Km \bract Fx}
      \arrow["{\phi_{m \ract x}}", from=1-1, to=1-2]
      \arrow["{\delta^{(K)}_{m,x}}", from=1-2, to=2-2]
      \arrow["{\delta_{m,x}^{(G)}}"', from=1-1, to=2-1]
      \arrow["{\phi_m \bract \psi_x}"', from=2-1, to=2-2]
    \end{tikzcd}
  \end{equation}
  commutes, for all \(x \in \cat{C}\) and \(m \in \cat{M}\).

  We call \((\phi, \psi)\) a \emph{morphism of comodule functors} if \(B=F\) and \(\psi=\id_B\).
\end{definition}

Given a comodule natural transformation $(\phi \from G \nt K,\, \psi \from B \nt F)$,
the graphical version of \cref{eq:comodule-natural-transformation} is displayed in our next picture,
where the blue dot indicates \(\phi\) and the black dot indicates $\psi$.
\[
  \tikzfig{comodule-morphism}
\]

\begin{remark}\label{rmk:comod-nat-trafo-is-comod-morphism}
  Suppose the pair \(\phi \from G \nt K\) and \(\psi \from B \nt F\) constitute a comodule natural transformation.
  We can view \(\phi \from G \nt K\) as a morphism of comodule functors over \(F\) if we equip \(G\) with a new coaction.
  It is given for all \(x \in \cat{C}\) and \(m \in \cat{M}\) by
  \[
    G(m \ract x) \xrightarrow{\;\delta^{(G)}_{m,x}\;} Gm \bract Bx \xrightarrow{Gm\bract\psi_x} Gm \bract Fx.
  \]
  It follows that by altering the involved coactions suitably,
  comodule natural transformations and morphisms of comodule functors can be identified with each other.
\end{remark}

Let \((B, \mu, \eta, B_2, B_0)\) be a bimonad on \(\cat{C}\).
The unit \(\eta \from \Id_{\cat{C}} \to B\) implements a coaction on \(\Id_{\cat{M}} \from \cat{M} \to \cat{M}\) via
\[
  \id_m \ract \eta_x \from \Id_{\cat{M}}(m \ract x) \to \Id_{\cat{M}}m \ract Bx, \qquad \text{ for all } x \in \cat{C}, m \in \cat{M}.
\]
Using the multiplication \(\mu \from B^2 \nt B\),
we can equip the composition \(GK\from \cat{M} \to \cat{M}\) of
two \(B\)-comodule functors \(G, K \from \cat{M} \to \cat{M}\) with a comodule structure:
\[
  GK(\blank \ract \bblank) \xrightarrow{G\delta^{(K)}} G(K\blank \ract B\bblank) \xrightarrow{\delta^{(K)}} GK\blank \ract B^2\bblank \xrightarrow{\id \ract \mu} GK\blank \ract B\bblank.
\]
Due to the associativity and unitality of the multiplication of \(B\), the category \(\Com(B, \cat{M})\) of comodule endofunctors on \(\cat{M}\) over \(B\) is monoidal.
Studying its monoids will be a main focus of the rest of this article.

\begin{definition}\label{def: comodule-monad}
  Consider a bimonad \(B\) on \(\cat{C}\).
  A \emph{comodule monad} over \(B\) on \(\cat{M}\) is a comodule endofunctor \((K, \delta) \from \cat{M} \to \cat{M}\)
  together with morphisms of comodule functors \(\mu \from K^2 \nt K\)  and \(\eta \from \Id_{\cat{M}} \nt K\)
  such that \((K,\mu,\eta)\) is a monad.
\end{definition}

The conditions for the multiplication and unit of
a comodule monad \(K \from \cat{M} \to \cat{M}\)
over a bimonad \(B \from \cat{C} \to \cat{C}\)
to be morphisms of comodule functors amount to
\[
  \tikzfig{mult-and-unit-comodule-monad}
\]
Notice how these conditions are analogous to those in
\cref{eq:mu-opmonoidal-nt,eq:eta-opmonoidal-nt}.

\begin{example}\label{ex:comodule-monads-example}
  Consider the poset \((\mathbb{R}, \leq)\).
  It is a monoidal category with addition as tensor product and \(0 \in \mathbb{R}\) as unit.
  Hasegawa and Lemay, see~\cite{hasegawa-lemay2018:LinearHopf}, defined a bimonad using the ceiling function.
  Let
  \begin{equation*}
    H \from \mathbb{R}\to \mathbb{R}, \qquad x \mapsto \lceil x \rceil \eqdef\min \{\, n\in \mathbb{Z} \mid n \geq x \,\}.
  \end{equation*}
  As \(\lceil x + y \rceil \leq \lceil x \rceil + \lceil y \rceil\) for all \(x,y \in \mathbb{R}\) and \(\lceil 0 \rceil = 0\), the functor \(H\) is opmonoidal with comultiplication and counit given by the unique arrows
  \begin{equation*}
    H_1 \from H0=0 \to 0 \qquad\text{and}\qquad H_{2; x,y} \from H(x+y) \to Hx+ Hy, \text{ for all \(x,y \in \mathbb{R}\)}.
  \end{equation*}
  The idempotence of the ceiling function implies that the identity natural transformation defines a multiplication \(\mu \from H^2 \to H\).
  Its unit corresponds to \({(x \to Hx)}_{x \in \mathbb{R}}\).

  Given a number \(a \in [0, 1)\) in the half-open interval bounded by \(0\) and \(1\), define
  \begin{equation*}
    C_a \from \mathbb{R} \to \mathbb{R}, \qquad x \mapsto \min\{\, a+ n \mid n \in \mathbb{Z}, a+n \geq x \,\}.
  \end{equation*}
  In case \(a=0\), we have \(C_a=H\).
  Otherwise \(C_a(0)= a> 0\) and \(C_a\) cannot be opmonoidal.
  Nonetheless, \(C_a(x+y) \leq C_a x + \lceil y \rceil = C_a x + Hy\) holds,
  and the unique natural arrow
  \begin{equation*}
    \delta^{(a)}_{x,y} \from C_a(x+y) \to C_a x + Hy, \qquad\qquad \text{for all } x,y\in \mathbb{R}
  \end{equation*}
  defines a coaction of\, \(H\) on \(C\).
  Again, \(C_a^2=C_a\) is idempotent and \(x \leq C_a(x)\) for all \(x \in \mathbb{R}\).
  Thus, it is a comodule monad over \(H\).
\end{example}

\begin{example}\label{ex:copower-as-comodule-monad}
  Let \((\cat{V}, \otimes, 1)\) be a closed symmetric monoidal category.
  A \(\cat{V}\)-category \(\cat{C}\)
  is said to be \emph{copowered}
  over \(\cat{V}\),
  see~\cite[Section~3.7]{kelly05:basic},
  if there exists a functor \(\blank \cdot \bblank \from \cat{C} \times \cat{V} \to \cat{C}\),
  such that for all \(c \in \cat{C}\) we have
  \[
    \adj{ c \cdot\blank}{\cat{C}(c, \blank)}{\cat{V}}{\cat{C}}.
  \]
  One obtains \(c\cdot (v \otimes w) \cong (c\cdot v) \cdot w\) by the Yoneda lemma:
  \begin{align*}
    \cat{C}\big(c \cdot (v \otimes w), x\big)
    & \cong \cat{V}\big(v \otimes w, \cat{C}(c, x)\big)
      \cong \cat{V}\big(w, \cat{V}(v, \cat{C}(c, x))\big) \\
    & \cong \cat{V}\big(w ,\cat{C}((c \cdot v), x)\big)
      \cong \cat{C}\big((c \cdot v)\cdot w, x\big).
  \end{align*}

  In fact, this turns \(\cat{C}\) into a \(\cat{V}\)-module category,
  and therefore the identity monad on \(\cat{C}\) into a comodule monad over \(\Id_{\cat{V}}\) with trivial coaction.

  Suppose that \(B \eqdef UF\) is a bimonad on \(\cat{V}\).
  The unit \(\eta^{(B)} \from \Id_{\cat{V}} \to B\) is a morphism of bimonads and we may extend the coaction of\, \(\Id_{\cat{C}}\) to
  \[
    \delta\from \blank \cdot \bblank \xrightarrow{\; \blank \,\cdot\, \eta^{(B)}\;} \blank \otimes B\bblank.
  \]
\end{example}

\begin{remark}\label{rem:comodule-monads-induce-module-categories}
  Let \(B \from \cat{C} \to \cat{C}\) be a bimonad and  \((K, \delta) \from \cat{M} \to \cat{M}\) a comodule monad over it.
  The coaction of \(K\) allows us to define an action \(\widehat{\ract} \from \cat{M}^{K}\times \cat{C}^B \to \cat{M}^K\).
  For any two modules \((m, \vartheta_{m}) \in \cat{M}^K\) and \((x, \vartheta_{x}) \in \cat{C}^B\), it is given by
  \[
    (m, \vartheta_{m}) \mathbin{\widehat{\ract}}  (x, \vartheta_{x}) \eqdef \big(m \ract x, (\vartheta_{m}\ract \vartheta_{x}) \circ \delta_{m,x} \big).
  \]
  The axioms of the coaction of \(B\) on \(K\) translate precisely to the compatibility of the action of\, \(\cat{C}^B\) on \(\cat{M}^{K}\) with the tensor product and unit of\, \(\cat{C}^B\).
\end{remark}

We have already seen that monads and adjunctions are in close correspondence, and that additional structures on the monads have their counterparts expressed in terms of the units and counits of adjunctions.
In the case of comodule monads this is slightly more complicated as we have two adjunctions to consider: one corresponding to the bimonad and one to the comodule monad.

\begin{definition}\label{def:comodule-adjunction}
  Consider two adjunctions \(\stdadj\) and \(\adj{G}{V}{\cat{M}}{\cat{N}}\) such that \(F \dashv U\) is monoidal and \(G,V\) are comodule functors over \(F,U\).
  We call the pair \((G \dashv V, F \dashv U)\) a \emph{comodule adjunction} if the following two identities hold:
  \[
    \begin{tikzcd}
      {m \ract x} & {VG(m \ract x)} & {GV(n \bract y)} & {G(Vn \ract Uy)} \\
      {VGm \ract UFx} & {V(Gm \bract Fx)} & {n \bract y} & {GVn \bract FUy}
      \arrow["{\eta^{(G \lad V)}_{m \,\ract\, x}}", from=1-1, to=1-2]
      \arrow["{V\delta^{(G)}_{m,x}}", from=1-2, to=2-2]
      \arrow["{\delta^{(V)}_{Gm \,\bract\, Fx}}", from=2-2, to=2-1]
      \arrow["{\eta^{(G \lad V)}_m \,\ract\, \eta^{(F \lad U)}_x}"', from=1-1, to=2-1]
      \arrow["{\varepsilon^{(G \lad V)}_{n \,\bract\, y}}"', from=1-3, to=2-3]
      \arrow["{G\delta^{(V)}_{n,y}}", from=1-3, to=1-4]
      \arrow["{\delta^{(G)}_{Vn \,\ract\, Uy}}", from=1-4, to=2-4]
      \arrow["{\varepsilon^{(G \lad V)}_n \,\bract\, \varepsilon^{(F \lad U)}_y}", from=2-4, to=2-3]
    \end{tikzcd}
  \]
\end{definition}

From their string diagrammatic representations, one observes that the conditions required by \cref{def:comodule-adjunction} are analogous to
\cref{eq:unit-adjunction-opmonoidal-nt,eq:counit-adjunction-opmonoidal-nt}:
\begin{equation}\label[diagram]{eq:comodule-adjunction-unit-and-counit}
  \adjustbox{scale=0.95}{\tikzfig{comodule-adjunction-unit-and-counit}}
\end{equation}

Our main theorem extends~\cite[Proposition~4.1.2]{Aguiar2012} and~\cite[Lemma~7.10]{Turaev2017};
we prove it analogously to the latter.

\begin{theorem}\label{thm:comodule-adjunctions-correspond-to-strong-comodule-functors}
  Let \(\cat{C}\) and \(\cat{D}\) be monoidal categories,
  and suppose that \(\cat{M}\) and \(\cat{N}\) are right \(\cat{C}\)- and \(\cat{D}\)-module categories, respectively.
  Suppose that \,\stdadj{} is a monoidal adjunction,
  and let \(\adj{G}{V}{\cat{M}}{\cat{N}}\) be any adjunction.
  Lifts of\, \(G \dashv V\) to a comodule adjunction are in bijection with lifts of \(\,V \from \cat{N} \to \cat{M}\) to a strong comodule functor.
\end{theorem}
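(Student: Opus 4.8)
The plan is to carry out the comodule analogue of the doctrinal-adjunction (mate) argument behind \cref{lem:monoidal-adjunctions-vs-bimonads}, following~\cite[Lemma~7.10]{Turaev2017}. Throughout I abbreviate the units and counits of the two adjunctions by $\eta^{(G \lad V)},\varepsilon^{(G \lad V)}$ and $\eta^{(F \lad U)},\varepsilon^{(F \lad U)}$, suppressing components, and I use freely that the coherence cells of the monoidal adjunction $F \lad U$ are invertible (\cref{lem:monoidal-adjunctions-vs-bimonads}). The proposed bijection sends a comodule adjunction to the coaction $\delta^{(V)}$ of its right adjoint $V$; the theorem then amounts to two claims: (a) this $\delta^{(V)}$ is automatically a \emph{strong} comodule structure, and (b) the entire comodule adjunction is recoverable from $\delta^{(V)}$ alone, the two assignments being mutually inverse.

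For (a), given a comodule adjunction with coactions $\delta^{(G)}$ and $\delta^{(V)}$, I form the mate of $\delta^{(G)}$,
\[
  \overline{\delta} \coloneqq V\bigl(\varepsilon^{(G \lad V)} \bract \varepsilon^{(F \lad U)}\bigr) \circ V\delta^{(G)} \circ \eta^{(G \lad V)} \from V\blank \ract U\bblank \nt V(\blank \bract \bblank),
\]
a natural transformation pointing opposite to $\delta^{(V)}$. The first task is to prove $\delta^{(V)} \circ \overline{\delta} = \id$ and $\overline{\delta} \circ \delta^{(V)} = \id$. Substituting the two defining squares of \cref{def:comodule-adjunction} (drawn in \cref{eq:comodule-adjunction-unit-and-counit}) and then cancelling the opposed units against counits by the snake identities collapses each composite to the identity. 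Thus $\delta^{(V)}$ is invertible, $V$ is strong, and the forward assignment is well defined.

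For (b), assume conversely that $V$ is strong, with invertible coaction $\delta^{(V)}$. I equip $G$ with the mate of $(\delta^{(V)})^{-1}$,
\[
  \delta^{(G)} \coloneqq \varepsilon^{(G \lad V)} \circ G\bigl((\delta^{(V)})^{-1}\bigr) \circ G\bigl(\eta^{(G \lad V)} \ract \eta^{(F \lad U)}\bigr) \from G(\blank \ract \bblank) \nt G\blank \bract F\bblank,
\]
and then check three things. Coassociativity and counitality of $\delta^{(G)}$ (\cref{eq:coassoc-of-comodule-monad,eq:counitality-of-comodule-monad}) are transported from those of $\delta^{(V)}$ across the mate; invertibility of $\delta^{(V)}$ and of the coherences of $F \lad U$ is what makes this transport legitimate, since it is $(\delta^{(V)})^{-1}$ that enters the definition. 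The two squares of \cref{def:comodule-adjunction}, expressing that $\eta^{(G \lad V)}$ and $\varepsilon^{(G \lad V)}$ are comodule natural transformations, reduce---after unfolding the definition of $\delta^{(G)}$---to naturality together with the snake identities, so $(G \lad V, F \lad U)$ is indeed a comodule adjunction.

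Finally, the two assignments are mutually inverse by involutivity of the mate correspondence, itself a direct consequence of the snake identities: running (b) after (a) rebuilds $\delta^{(G)}$ as the mate of $(\delta^{(V)})^{-1} = \overline{\delta}$, i.e.\ as the mate of the mate of $\delta^{(G)}$, hence returns $\delta^{(G)}$ and the original comodule adjunction; running (a) after (b) produces a coaction whose inverse is the mate of the reconstructed $\delta^{(G)}$, namely $(\delta^{(V)})^{-1}$, hence returns $\delta^{(V)}$. The step I expect to be the main obstacle is the core of (a): showing that the \emph{single} transformation $\overline{\delta}$ is a \emph{two-sided} inverse of $\delta^{(V)}$ forces one to use \emph{both} compatibility squares simultaneously and to chase several snake cancellations through the module action, and it is exactly this that promotes a bare coaction to a strong one. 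As in the rest of \cref{sec:comodule-monads}, these cancellations are most transparent when drawn in the sheet calculus, each amounting to the straightening of a string.
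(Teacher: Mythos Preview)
Your proposal is correct and follows essentially the same approach as the paper: both construct the inverse of \(\delta^{(V)}\) as the mate \(\overline{\delta} = V(\varepsilon^{(G \lad V)} \bract \varepsilon^{(F \lad U)}) \circ V\delta^{(G)} \circ \eta^{(G \lad V)}\), both define \(\delta^{(G)}\) in the converse direction as the mate of \((\delta^{(V)})^{-1}\), and both verify the comodule-adjunction axioms via snake cancellations in the sheet calculus. The only notable difference is packaging: you phrase the bijectivity step via involutivity of the mate correspondence, whereas the paper verifies one direction by a direct computation that the reconstructed \(\lambda^{(G)}\) equals \(\delta^{(G)}\) and notes the other is immediate because the backward assignment leaves \(\delta^{(V)}\) untouched.
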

\begin{proof}
  Let \(G \dashv V\) be a comodule adjunction over \(F \dashv U\),
  and write \(\delta^{(V)}\) and \(\delta^{(G)}\) for the coactions of \(V\) and \(G\), respectively.
  For all \(m \in \cat{M}\) and \(x \in \cat{C}\),
  we define the inverse \(\delta^{-(V)}\from V\blank \ract U\bblank \nt V(\blank \bract \bblank)\) of\, \(\delta^{(V)}\) via
  \[
    Vn \ract  Uy
    \xrightarrow{\eta^{(G \adjoint V)}_{Vn \,\ract\, Uy}} VG(Vn \ract Uy)
    \xrightarrow{V\delta^{(G)}_{Vn, Uy}} V(GVm \bract FUx)
    \xrightarrow{V\left(\varepsilon^{(G \adjoint V)}_m \,\bract\, \varepsilon^{(F \adjoint U)}_x\right)} V(m \bract x).
  \]
  That is,
  \begin{equation}\label[diagram]{eq:inverse-of-coaction}
    \tikzfig{lifting-adjunctions-c2-inverse-def}
  \end{equation}
  Using that \(G\) and \(V\) are part of a comodule adjunction,
  a straightforward computation proves \(\delta^{-(V)} \circ \delta^{(V)} = \Id_{V(\blank \,\bract\, \bblank)}\):
  \begin{equation*}
    \tikzfig{lifting-adjunctions-c2-post-inverse}
  \end{equation*}
  A similar strategy can be used to show that \(\delta^{(V)} \circ \delta^{-(V)} = \id_{V\blank \,\ract\, U\bblank}\).
  Thus, \(\delta^{(V)}\) is a natural isomorphism and therefore \(V\) is a strong comodule functor.

  Conversely, suppose that \((V, \delta^{(V)}) \from \cat{N} \to \cat{M}\) is a strong comodule functor over \(U\).
  Define an arrow \(\delta^{(G)}\from G(\blank \ract \bblank) \nt G\blank \bract F\bblank\) by
  \begin{equation}\label[diagram]{eq:coaction-via-counit-unit-and-inverse}
    \tikzfig{lifting-adjunctions-c2-def}
  \end{equation}
  Suppose that \(\eta\) and \(\varepsilon\) are the unit and counit of the adjunction \(F \lad U\).
  Due to~\cite[Lemma~7.10]{Turaev2017}, the comultiplication and counit of\, \(F \from \cat{C} \to \cat{D}\) are for all \(x,y \in \cat{C}\) given by
  \[
    F_{2,x,y} \eqdef F(x \otimes y)
    \xrightarrow{F(\eta_x \otimes \eta_y)} F(UFx \otimes UFy)
    \xrightarrow{FU_{-2, Fx, Fy}} FU(Fx \otimes Fy)
    \xrightarrow{\varepsilon_{Fx \otimes Fx}} Fx \otimes Fy,
  \]
  \[
    F_0 \eqdef F1 \xrightarrow{FU_{-0}} FU1 \xrightarrow{\epsilon_1} 1.
  \]
  Note that, graphically, \(F_2\) looks just like \cref{eq:coaction-via-counit-unit-and-inverse},
  with black strings taking the place of blue ones.
  The following shows that \(\delta^{(G)}\from G(\blank \ract \bblank) \nt G\blank \bract F\bblank\) is a coaction;
  \ie, satisfies the coassociativity and counitality conditions given in
  \cref{eq:coassoc-of-comodule-monad,eq:counitality-of-comodule-monad}:
  \[
    \adjustbox{scale=0.9}{\tikzfig{lifting-adjunctions-c2-prop-1}}
  \]
  and
  \[
    \tikzfig{lifting-adjunctions-c2-prop-2}
  \]
  A straightforward computation proves that the unit of the adjunction \(G \dashv V\) satisfies
  the axioms displayed in  \cref{eq:comodule-adjunction-unit-and-counit}:
  \begin{equation*}
    \tikzfig{lifting-adjunctions-eta}
  \end{equation*}
  An analogous computation for the counit shows that \(G \dashv V\) is a comodule adjunction.

  To see that these constructions are inverse to each other,
  first suppose that we have a comodule adjunction \((G, \delta^{(G)}) \adjoint (V, \delta^{(V)})\).
  By utilising \(\delta^{-(V)}\) as given in \cref{eq:inverse-of-coaction}, we obtain another coaction \(\lambda^{(G)}\) on \(G\),
  see \cref{eq:coaction-via-counit-unit-and-inverse}.
  A direct computation shows that \(\delta^{(G)} = \lambda^{(G)}\):
  \begin{equation*}
    \tikzfig{lifting-adjunctions-ops-are-inverses}
  \end{equation*}

  The converse direction is clear since the map which associates to any strong comodule structure on $V$ a comodule adjunction $G \lad V$ preserves the coaction of $V$.
\end{proof}

The philosophy that monads and adjunctions are two sides of the same coin extends to the comodule setting.
Suppose that we have a monoidal adjunction \stdadj{} and over it a comodule adjunction \(\adj{G}{V}{\cat{M}}{\cat{N}}\).
As stated in~\cite[Proposition~4.3.1]{Aguiar2012}, the bimonad \(B\eqdef UF\) admits a coaction on the monad \(K \eqdef VG\).
For any \(m \in \cat{M}\) and \(x \in \cat{C}\) it is given by
\[
  VG(m \ract x) \xrightarrow{V\delta^{(G)}_{m,x}} V(Gm \bract Fx) \xrightarrow{\delta^{(V)}_{Gm,Fx}} Km\ract Bx.
\]

Using the previous result, we clarify the structure of comparison functors associated to comodule adjunctions.
Its proof is analogous to~\cite[Theorem~2.6]{Bruguieres2007}.

\begin{proposition}\label{prop:comparison-functor-comodule-monad-comodule}
  Consider a comodule adjunction \(\adj{G}{V}{\cat{M}}{\cat{N}}\) over a monoidal adjunction \(\stdadj\),
  and denote the associated comodule monad and bimonad by \(K \eqdef VG \from \cat{M} \to \cat{M}\) and \(B \eqdef UF \from \cat{C} \to \cat{C}\), respectively.
  The comparison functor \(\Sigma^{K} \from \cat{N} \to \cat{M}^K\) is a strong comodule functor over \(\Sigma^{B} \from \cat{D} \to \cat{C}^B\).
  We furthermore have that
  \[
    U^K \Sigma^{K} = V \qquad\text{and}\qquad \Sigma^{K} G = F^K
  \]
  as comodule functors.
\end{proposition}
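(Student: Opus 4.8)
The plan is to follow the proof of \cref{lem:comparison-functor-bimonad-monoidal} (that is, \cite[Theorem~2.6]{Bruguieres2007}) and to obtain the comodule structure on $\Sigma^{K}$ by \emph{lifting} the coaction of $V$ through the forgetful functor. First I would assemble the ingredients. Applying \cref{def: comparison-functor} to the monad $K = VG$ of $G \adjoint V$ and to the bimonad $B = UF$ of $F \adjoint U$, the underlying comparison functors $\Sigma^{K} \from \cat{N} \to \cat{M}^{K}$ and $\Sigma^{B} \from \cat{D} \to \cat{C}^{B}$ exist and satisfy $U^{K}\Sigma^{K} = V$, $\Sigma^{K}G = F^{K}$, $U^{B}\Sigma^{B} = U$, and $\Sigma^{B}F = F^{B}$. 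By \cref{lem:comparison-functor-bimonad-monoidal}, $\Sigma^{B}$ is strong monoidal and these identities for $B$ hold as opmonoidal functors, so that speaking of a comodule functor over $\Sigma^{B}$ makes sense. Finally, since $G \adjoint V$ is a comodule adjunction over the monoidal adjunction $F \adjoint U$, \cref{thm:comodule-adjunctions-correspond-to-strong-comodule-functors} guarantees that $V$ is a strong comodule functor over $U$, with invertible coaction $\delta^{(V)}$. Throughout I would use that $\cat{M}^{K}$ is the \EM{} category of $K$, so $U^{K}$ is faithful and reflects isomorphisms, and that---by the construction of $\widehat{\ract}$ in \cref{rem:comodule-monads-induce-module-categories}---the functor $U^{K}$ is a \emph{strict} comodule functor over $U^{B}$.

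The central step is to define the coaction of $\Sigma^{K}$. For $n \in \cat{N}$ and $y \in \cat{D}$, both $\Sigma^{K}(n \bract y)$ and $\Sigma^{K}n \mathbin{\widehat{\ract}} \Sigma^{B}y$ are $K$-algebras on the underlying objects $V(n \bract y)$ and $Vn \ract Uy$, respectively, their $K$-actions being read off from the formula for $\Sigma^{K}$ and from \cref{rem:comodule-monads-induce-module-categories}. The claim I would establish is that the underlying morphism $\delta^{(V)}_{n,y} \from V(n \bract y) \to Vn \ract Uy$ is a morphism of $K$-algebras, hence lifts to a morphism $\delta^{(\Sigma^{K})}_{n,y}$ in $\cat{M}^{K}$. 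This is the one genuine computation, and I expect it to be the main obstacle: it is a string-diagrammatic identity that unfolds the $K$-action on $\Sigma^{K}n \mathbin{\widehat{\ract}} \Sigma^{B}y$ using the formula $\delta^{(K)} = \delta^{(V)} \circ V\delta^{(G)}$ recorded just before the statement, and then cancels it against $\delta^{(V)}$ by invoking naturality of $\delta^{(V)}$ together with the counit compatibility of the comodule adjunction in \cref{eq:comodule-adjunction-unit-and-counit}. It is the exact analogue, one colour at a time, of the verification that $U_{2}$ is a morphism of $B$-algebras in the bimonad case.

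With $\delta^{(\Sigma^{K})}$ in hand, the remaining properties follow formally from faithfulness of $U^{K}$. Since $U^{K}$ is strict comodule over $U^{B}$ and $U^{K}\delta^{(\Sigma^{K})} = \delta^{(V)}$ by construction, applying $U^{K}$ converts the coassociativity and counitality squares for $\delta^{(\Sigma^{K})}$ into those for $\delta^{(V)}$ (here using $U^{B}\Sigma^{B} = U$ as opmonoidal functors); as these hold and $U^{K}$ reflects commuting diagrams, $\delta^{(\Sigma^{K})}$ is a coaction. Because $\delta^{(V)}$ is invertible and $U^{K}$ reflects isomorphisms, $\delta^{(\Sigma^{K})}$ is invertible too, so $\Sigma^{K}$ is a strong comodule functor over $\Sigma^{B}$. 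The identity $U^{K}\Sigma^{K} = V$ holds as comodule functors because $U^{K}$ is strict and $U^{K}\delta^{(\Sigma^{K})} = \delta^{(V)}$. For $\Sigma^{K}G = F^{K}$ as comodule functors, the underlying functors already agree, and applying $U^{K}$ to the coaction of the composite $\Sigma^{K}G$ yields $\delta^{(V)}_{G\blank, F\bblank} \circ V\delta^{(G)} = \delta^{(K)}$, which is precisely $U^{K}$ of the canonical coaction on $F^{K}$; the two coactions therefore coincide by faithfulness, completing the proof.
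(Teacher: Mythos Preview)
Your proposal is correct and follows essentially the same route as the paper: lift $\delta^{(V)}$ to a coaction $\delta^{(\Sigma^K)}$ (the paper's ``direct computation'' is exactly your verification that $\delta^{(V)}_{n,y}$ is a $K$-algebra map), deduce strength from conservativity of $U^K$ together with \cref{thm:comodule-adjunctions-correspond-to-strong-comodule-functors}, and check the two equalities as comodule functors via $U^K$ being strict. Your write-up is in fact more explicit than the paper about why the coaction axioms transfer (faithfulness of $U^K$) and about the string-diagrammatic content of the lifting step.
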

\begin{proof}
  For any \(n \in \cat{N}\) we have \(\Sigma^{K}n= (Vn, V\varepsilon_{n})\) and a direct computation shows that the coaction of \(V\) lifts to a coaction of\, \(\Sigma^{B}\) on \(\Sigma^{K}\).
  That is,
  \[
    U^{K} \delta^{(\Sigma^{K})}_{n, y} = \delta ^{(V)}_{n, y},
    \qquad \qquad \text{ for all } n \in \cat{N} \text{ and } y \in \cat{D}.
  \]
  Since \(U^K\) is conservative and \(\delta^{(V)}\) is an isomorphism by Theorem~\ref{thm:comodule-adjunctions-correspond-to-strong-comodule-functors}, \(\Sigma^{K}\) is a strong comodule functor.

  The \(U^B\) coaction on \(U^{K}\) is given by the identity natural transformation and we get \(U^{K}\Sigma^{K} = V\)  as comodule functors.
  Lastly, we compute for any \(x \in \cat{C}\) and
  \(m \in \cat{M}\),
  \[
    \delta^{(\Sigma^{K} G)}_{m, x}
    = \delta^{(U^K \Sigma^{K} G)}_{m, x}
    = \delta^{(VG)}_{m, x}
    = \delta^{(K)}_{m, x}
    = \delta^{(U^K F^K)}_{m, x}
    = \delta^{(F^K)}_{m, x}.
  \]
\end{proof}

Theorem~\ref{thm:comodule-adjunctions-correspond-to-strong-comodule-functors} yields furthermore a description of a comodule monad's coaction in terms of its \EM{} adjunction;
\ie, \cref{thm:intr:comodule-monad-reconstruction}, our desired analogue of \cref{thm:intr:bimonad-reconstruction}.

\begin{proof}[Proof of \cref{thm:intr:comodule-monad-reconstruction}]
  Suppose \(\cat{C}^{B}\) acts from the right on \(\cat{M}^{K}\) such that \(U^{K}\) is a strict comodule functor.
  Due to \cref{thm:comodule-adjunctions-correspond-to-strong-comodule-functors}, \(K = U^K F^K\) is a comodule monad via the coaction
  \begin{equation}
    \delta^{(K)} \eqdef \delta^{(U^K)} \circ U^K\delta^{(F^K)} = U^K\delta^{(F^K)}.
  \end{equation}

  Conversely, if \(K\) is a comodule monad, \(\cat{M}^{K}\) becomes a suitable right module over \(\cat{C}^{B}\) with the action as given in \cref{rem:comodule-monads-induce-module-categories}.

  Since the coaction on \(K\) and the action of\, \(\cat{C}^B\) on \(\cat{M}^K\) determine the coactions of \(F^{K}\) uniquely, the above constructions are inverse to each other by \cref{thm:comodule-adjunctions-correspond-to-strong-comodule-functors}.
\end{proof}

\printbibliography{}

\end{document}